\newtheorem{theorem}{Theorem}[section]
\numberwithin{equation}{section}
\newtheorem{proposition}[theorem]{Proposition}
\renewcommand{\ell}{l}
\renewcommand{\epsilon}{\varepsilon}
\def\ga{\gamma}
 \def\cS{\mathcal{S}}
\def\px{\langle x \rangle}
\def\rd{\bR^d}
\newcommand{\Fur}{\mathcal{F}}
\def\R{\right)}
\def\<{\left<}
\def\>{\right>}
\def\mv1{M_v^1}
\def\mn{(m,n)}
\def\mn'{(m',n')}
\newcommand{\bC}{\mathbb{C}}
\def\N{\mathbb{N}}
\def\R{\mathbb{R}}
\def\C{\mathbb{C}}
\def\rd{\mathbb{R}^d}
\begin{document}

\title[]{Sharp decay estimates and smoothness for solutions to nonlocal semilinear equations}
\begin{abstract}
We consider semilinear equations of the form $p(D)u=F(u)$, with a locally bounded nonlinearity $F(u)$, and a linear part $p(D)$ given by a Fourier multiplier. The multiplier $p(\xi)$ is the sum of positively homogeneous terms, with at least one of them non smooth.  This general class of equations includes most physical models for traveling waves in hydrodynamics, the Benjamin-Ono equation being a basic example.\par
We prove sharp pointwise decay estimates for the solutions to such equations, depending on the degree of the non smooth terms in $p(\xi)$. When  the nonlinearity is smooth we prove similar estimates for the derivatives of the solution, as well holomorphic extension to a strip, for analytic nonlinearity.
\end{abstract}
\author{Marco Cappiello \and Fabio Nicola}
\address{Dipartimento di Matematica,  Universit\`{a} degli Studi di Torino,
Via Carlo Alberto 10, 10123
Torino, Italy
} \email{marco.cappiello@unito.it}
\address{Dipartimento di Scienze Matematiche, Politecnico di
Torino, Corso Duca degli
Abruzzi 24, 10129 Torino,
Italy}
\email{fabio.nicola@polito.it}
\subjclass[2000]{}
\date{}
\keywords{Solitary waves, algebraic decay,
holomorphic extension}
\maketitle

\section{Introduction}
The main goal of this paper is to derive sharp pointwise decay estimates and holomorphic extensions for the solutions of a class of nonlocal semilinear equations in $\R^d$. Namely we consider an equation of the form
\begin{equation} \label{equation}
p(D)u=F(u),
\end{equation}
where $F:\bC\to\bC $ is any measurable function satisfying 
\begin{equation}\label{nonlin}
|F(u)|\leq C_K |u|^{p}
\end{equation}
for some $p>1$, uniformly for $u$ in compact subsets $K\subset\bC$. For example, $F$ could be any smooth function vanishing to second order at the origin. 
Concerning the linear part, we assume that $p(D)$ is a Fourier multiplier with symbol of the form
\begin{equation}\label{simbolo}
p(\xi)=p_0+\sum_{j=1}^h p_{m_j}(\xi)
\end{equation}
where $p_0 \in\C$ and $p_{m_j}(\xi)$ are smooth in $\rd\setminus\{0\}$ and positively homogeneous of degree $m_j$, with $0<m_1<m_2<\ldots<m_h=M$.\par
We suppose that $p(\xi)$ satisfies the condition
\begin{equation}\label{ellitticita}
|p(\xi)|\geq C\langle \xi\rangle^M,\quad \xi\in\rd,
\end{equation}
for some $C>0$ (as usual $\langle \xi\rangle=(1+|\xi|^2)^{1/2}$).
This implies in particular that $p_0\not=0$.\par
 Moreover we suppose that there is at least a symbol $p_{m_j}$ which is non smooth, and we define the singularity index
\begin{equation}
m=\min\{m_j:\ p_{m_j}\text{ is not smooth}\},
\end{equation}
so that $m >0$.
\par
Equations of the form above appear in a big number of physical models, especially in hydrodynamics, where they are related to the existence of travelling wave type solutions $v(t,x)=u(x-ct)$, $c>0$ for an evolution equation of the form 
\[
v_t + (p(D)v)_x =F(v)_x,\quad t \geq 0,\ x \in \R.
\]
 In the case when $p(\xi)$ is a uniformly analytic symbol in $\R$, Bona and Li \cite{BL1, BL2} proved that every solution of \eqref{equation} exhibits an exponential decay for $|x| \to \infty$, therefore of the form $e^{-c|x|}$ for some $c>0$, and admits an extension to a holomorphic function in a strip of the form $\{z \in \mathbb{C}: |{\rm Im}\, z| <T\}$ for some $T>0$. The results in \cite{BL1, BL2} apply in particular to KdV-type equations, long-wave-type equations and certain Schr\"odinger-type equations. Later on the results above have been extended in arbitrary dimension by the first author et al. to more general equations with linear part given by an analytic pseudodifferential operator, see \cite{A18, A39, CGR, CGR2}. More recently, the results on the holomorphic extensions have been refined in \cite{CN1, CN2}. \par
Concerning the case when the symbol $p(\xi)$ is only finitely smooth at $\xi =0$, many models suggest that the exponential decay observed in the previous papers is lost in general and replaced by an algebraic decay at infinity depending on the singularity index $m$ and on the dimension $d$.
The most celebrated example in this connection is the Benjamin-Ono equation studied by Benjamin \cite{Benjamin} and Ono \cite{Ono} (see also \cite{BP, LPP, MST, T}) and describing the propagation of one dimensional internal waves in stratified fluids of great depth:
\begin{equation} \label{BO}
\partial_t v +H(\partial_x^2 v)+2v \partial_x v =0, \quad t \in \R,\ x \in \R,
\end{equation}
where $H(D)$ denotes the Hilbert transform. When looking at travelling solutions of \eqref{BO} one is reduced to study the equation
\begin{equation} \label{BO2}
|D|u+u =u^2,
\end{equation} which is of the form \eqref{equation} with $p(\xi)=1+|\xi|$ and $F(u)=u^2,$ with singularity index $m=1$. It is well known, see e.g. \cite{AT}, that the equation \eqref{BO2} admits the solution $$u(x)= \frac1{1+x^2}.$$
Maris \cite{M} considered the following generalization of \eqref{BO2} in higher dimension:
\begin{equation} \label{genBO2}
(1+(-\Delta)^{1/2})u=F(u), \end{equation}
where $F$ satisfies the same assumption as in the present paper. He proved that every solution $u$ of \eqref{genBO2} which is in $L^\infty(\R^d)$ and tends to $0$ for $|x| \to \infty$, 
 actually decays like $|x|^{-d-1}$.\\
In a very recent paper \cite{CGRN}, the first author et al. considered the general equation \eqref{equation} with $p(\xi)$ as in \eqref{simbolo} and $F(u)$ given by a polynomial of degree at least $2$ in $u$ and they derived $L^2$ decay estimates for weak Sobolev type solutions of \eqref{equation}. Namely,  they proved that every solution $u$ of \eqref{equation} such that $\px^{\varepsilon_o}u \in H^s (\R^d)$ for some $s >d/2$ and $\varepsilon_o >0$ satisfies indeed the following weighted estimates:
\begin{equation}
\label{weighted}
\| \px^{m+d/2+|\alpha|-\varepsilon} \partial^\alpha u \|_s <\infty.
\end{equation}
Unfortunately, the latter result has been proved under the assumption $[m] >d/2$, which seems indeed to be only technical but very restrictive, especially in high dimension. Moreover, pointwise decay estimates remained out of reach in \cite{CGRN}. This was due to the fact that the argument of the proof relies in particular on the H\"ormander-Mihlin theorem for Fourier multipliers which does not apply on $L^{\infty}(\R^d)$.
Nevertheless the estimate \eqref{weighted} together with the results in \cite{M} and some examples reported in \cite{CGRN} lead to conjecture that the solution of \eqref{equation} may satisfy a pointwise estimate of the form 
\begin{equation}\label{pointwise}
\| \langle x\rangle ^{m+d} u\|_{L^\infty} < \infty. 
\end{equation}
\par
In the present paper we prove this, together with higher regularity estimates, when the nonlinearity is smooth or analytic.\par
Namely, we have the following results.
\begin{theorem}\label{mainteo}
With the above notation, assume \eqref{nonlin}, \eqref{simbolo}, \eqref{ellitticita}. Let $u$ be a distribution solution of $p(D)u=F(u)$, satisfying $\langle x\rangle ^{\epsilon_0} u\in L^\infty(\R^d)$ for some $\epsilon_0>0$. Then $\langle x\rangle ^{m+d} u\in L^\infty (\R^d)$. 
\end{theorem}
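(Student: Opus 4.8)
The plan is to run a bootstrap argument on the scale of decay, using the fundamental solution (or rather the parametrix) associated to $p(D)$. The key point is that, by ellipticity \eqref{ellitticita}, $1/p(\xi)$ is a symbol of order $-M$, so that $u = p(D)^{-1}F(u)$ makes sense and the convolution kernel $K$ of $p(D)^{-1}$ has a precise behaviour near infinity dictated by the singularity index $m$: writing $1/p(\xi) = 1/p_0 + \text{(homogeneous corrections)}$ and expanding in the homogeneous pieces $p_{m_j}$, one isolates the least smooth homogeneous term of degree $m$, whose inverse Fourier transform contributes a tail of order $|x|^{-d-m}$, while the smooth remainder contributes a rapidly decaying (indeed Schwartz, modulo the $\delta$ at the origin from $1/p_0$) kernel. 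So the heuristic is $K(x) \sim c\,\delta(x) + O(\langle x\rangle^{-d-m})$ plus faster-decaying terms, and \eqref{pointwise} should follow once we know $F(u)$ decays fast enough.

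Concretely, I would first record the kernel estimate: there is a constant $c_0$ and a function $K_1$ with $|K_1(x)| \lesssim \langle x\rangle^{-d-m}$ (and better decay away from the critical homogeneity, but $-d-m$ is the worst tail) such that $u = c_0 F(u) + K_1 \ast F(u)$, at least after accounting for the low-frequency cutoff — one splits $1/p(\xi) = \chi(\xi)/p(\xi) + (1-\chi(\xi))/p(\xi)$ with $\chi$ a cutoff near $0$; the high-frequency part is Schwartz-kernel (smoothing), and the singularity structure lives entirely in the low-frequency part, where $1/p(\xi)$ is smooth except for the homogeneous singularities at $0$. Then I would set up the iteration. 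Suppose as inductive hypothesis that $\langle x\rangle^{\sigma} u \in L^\infty$ for some $\sigma \ge 0$ (the base case $\sigma = \epsilon_0$ is the hypothesis, once we upgrade it slightly — see below). Then by \eqref{nonlin}, locally $|F(u(x))| \lesssim |u(x)|^p \lesssim \langle x\rangle^{-p\sigma}$ (using that $u$ is bounded so its values lie in a fixed compact set). Feeding this into $u = c_0 F(u) + K_1 \ast F(u)$: the first term decays like $\langle x\rangle^{-p\sigma}$, and the convolution $\langle x\rangle^{-d-m} \ast \langle x\rangle^{-p\sigma}$ decays, by the standard convolution-of-power-weights lemma, like $\langle x\rangle^{-\min\{d+m,\, p\sigma,\, p\sigma + (d+m) - d\}} = \langle x\rangle^{-\min\{d+m,\, p\sigma,\, p\sigma+m\}}$ (with a logarithmic loss in the borderline cases, which one avoids by taking $\sigma$ slightly non-integer or by absorbing the log). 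So the new decay exponent is $\sigma' = \min\{d+m,\ p\sigma\}$ roughly; since $p>1$ and $\sigma>0$, iterating gives $\sigma \nearrow d+m$ in finitely many steps, and once we reach (or pass) that threshold the convolution saturates at $\langle x\rangle^{-d-m}$, which is exactly \eqref{pointwise}. The one subtlety at the start is that $\epsilon_0$ might be so small that $p\epsilon_0$ is still tiny; but the argument still increases the exponent by a fixed multiplicative factor $p>1$ each round, so finitely many iterations suffice regardless.

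I expect the main obstacle — and the place where care is genuinely needed rather than routine — to be the precise kernel analysis, i.e. proving that the convolution kernel of the low-frequency part of $p(D)^{-1}$ really does have a tail of size exactly $\langle x\rangle^{-d-m}$ and no worse, coming solely from the least-smooth homogeneous component. This requires expanding $1/p(\xi)$ near $\xi = 0$ as $1/p_0$ minus $p_0^{-2}\sum_j p_{m_j}(\xi)$ plus higher-order terms in the $p_{m_j}$'s, and then understanding which terms are smooth (hence give Schwartz tails) versus which carry the homogeneous singularity; the inverse Fourier transform of a function that is homogeneous of degree $m$ near the origin and smooth away from it, times a cutoff, decays like $|x|^{-d-m}$ (up to logarithmic factors when $d+m$ is an even integer, handled by the standard theory of homogeneous distributions), and one must check that products and higher powers of the $p_{m_j}$ only produce faster decay or are smooth. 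A secondary technical point is justifying the representation $u = p(D)^{-1}F(u)$ at the level of distributions — since $u$ is a priori only a weighted-$L^\infty$ distribution solution, one should check $F(u) \in L^\infty$ with some decay (immediate from \eqref{nonlin} once $u \in L^\infty$, which follows from $\langle x\rangle^{\epsilon_0}u \in L^\infty$) and that $p(D)$ is invertible on the relevant space, which is exactly \eqref{ellitticita}. Everything else — the convolution inequalities for power weights, the finite termination of the bootstrap — is standard.
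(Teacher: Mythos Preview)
Your approach is correct and essentially the same as the paper's: reduce to showing $p(D)^{-1}$ is bounded on $L^\infty_{v_s}$ for $0\le s\le d+m$, split $p(\xi)^{-1}$ into low- and high-frequency pieces via a cutoff $\chi$, control the low-frequency kernel by expanding $\chi/p$ in the homogeneous terms $p_{m_j}$ (so its inverse Fourier transform has a $\langle x\rangle^{-d-m}$ tail), and then bootstrap using $|F(u)|\lesssim|u|^p$. Two small corrections worth noting. First, the high-frequency piece $(1-\chi)/p$ is a classical symbol of order $-M$, so its convolution kernel is \emph{not} Schwartz when $M<d$: it has a local singularity $|x|^{-d+M}$ at the origin (still $L^1$ since $M>0$) together with rapid decay at infinity, and this is what the paper uses. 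Second, for the low-frequency piece the paper avoids a term-by-term analysis of ``products and higher powers of the $p_{m_j}$'' by observing that $\mathcal{F}L^\infty_{v_{d+m}}$ is a Banach algebra and rescaling $p(\xi)\mapsto p(\epsilon\xi)$ so that the Neumann series $\sum_\ell(-\tilde p/p_0)^\ell$ converges in that algebra; this is cleaner than checking individual terms and automatically handles the infinite sum.
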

When the nonlinearity $F$ is smooth, similar estimates hold for the derivatives.
\begin{theorem}\label{mainteo2}
Assume \eqref{simbolo}, \eqref{ellitticita} and let $F\in C^\infty(\mathbb{C})$, with $F(0)=0$, $F'(0)=0$.
Let $u$ be a distribution solution of $p(D)u=F(u)$, satisfying $\langle x\rangle ^{\epsilon_0} u\in L^\infty(\rd)$ for some $\epsilon_0>0$.\par
 Then $u$ is smooth and satisfies the estimates
\begin{equation}\label{stime2}
\|\langle x \rangle^{m+d+|\alpha|}\partial^\alpha u\|_{L^\infty}<\infty,\quad \alpha\in\mathbb{N}^d.
\end{equation}
\end{theorem}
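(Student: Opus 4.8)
The plan is to obtain, in order: the base decay rate from Theorem~\ref{mainteo}; $C^\infty$ smoothness of $u$ by an elliptic bootstrap; and the bounds \eqref{stime2} by an iterative improvement of the decay of \emph{all} derivatives of $u$, based on sharp estimates for the convolution kernels of the operators $\partial^\alpha p(D)^{-1}$.

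Since $F\in C^\infty(\mathbb{C})$ with $F(0)=F'(0)=0$, Taylor's formula gives $|F(u)|\le C_K|u|^2$ for $u$ in a compact set $K\subset\mathbb{C}$, so \eqref{nonlin} holds with $p=2$ and Theorem~\ref{mainteo} gives $\langle x\rangle^{m+d}u\in L^\infty(\rd)$; in particular $F(u)\in L^1\cap L^\infty$ with $|F(u)(x)|\le C\langle x\rangle^{-2(m+d)}$. For the smoothness, split, with $\chi_0\in C_c^\infty(\rd)$ equal to $1$ near the origin,
\[
\frac1{p(\xi)}=\frac{\chi_0(\xi)}{p(\xi)}+\frac{1-\chi_0(\xi)}{p(\xi)}=:\widehat{K_{\mathrm{low}}}(\xi)+\widehat{K_{\mathrm{high}}}(\xi).
\]
By \eqref{ellitticita} the symbol $\widehat{K_{\mathrm{high}}}$ is a classical symbol of order $-M$, hence $(1-\chi_0(D))p(D)^{-1}$ maps the Zygmund space $C^s_*$ into $C^{s+M}_*$; on the other hand $\chi_0(D)p(D)^{-1}$ sends every tempered distribution to a function with compactly supported Fourier transform, thus real-analytic. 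Since $F\in C^\infty$ with $F(0)=0$, the composition $v\mapsto F(v)$ maps $C^s_*\cap L^\infty$ into itself for $s>0$; so from $u\in L^\infty$ the identity $u=p(D)^{-1}F(u)$ bootstraps $u\in C^{kM}_*$ for every $k$, that is $u\in C^\infty(\rd)$.

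For \eqref{stime2}, write, for $\alpha\in\mathbb{N}^d$,
\[
\partial^\alpha u=(\partial^\alpha K_{\mathrm{low}})*F(u)+(\partial^\alpha K_{\mathrm{high}})*F(u),
\]
with $\widehat{\partial^\alpha K_{\mathrm{low}}}=(i\xi)^\alpha\chi_0/p$ and $\widehat{\partial^\alpha K_{\mathrm{high}}}=(i\xi)^\alpha(1-\chi_0)/p$. The key estimate is that $\partial^\alpha K_{\mathrm{low}}\in C^\infty\cap L^1(\rd)$ with
\[
|\partial^\alpha K_{\mathrm{low}}(x)|\le C_\alpha\langle x\rangle^{-m-d-|\alpha|},\qquad x\in\rd,
\]
and that all moments of $\partial^\alpha K_{\mathrm{low}}$ of order $<|\alpha|$ vanish (its Fourier transform has a zero of order $|\alpha|$ at the origin). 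Granting this, one first shows $|\partial^\gamma u(x)|\le C_\gamma\langle x\rangle^{-m-d}$ for every $\gamma$, by induction on $|\gamma|$ using the above bound, standard weighted convolution estimates, and the Fa\`a di Bruno formula (which together with the inductive hypothesis gives $|\partial^\delta F(u)|\le C\langle x\rangle^{-2(m+d)}$ for $|\delta|<|\gamma|$); the high-frequency term is controlled either directly or, when $\partial^\gamma K_{\mathrm{high}}$ is too singular at the origin (which happens only for $|\gamma|$ large relative to $M$), by a fixed-point argument exploiting that $|F'(u(x))|\le C|u(x)|\le C\langle x\rangle^{-m-d}$ is small for $|x|$ large while $\partial^\gamma u$ is locally bounded. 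One then runs a second induction over a level $j\in\mathbb{N}$, with hypothesis $|\partial^\gamma u(x)|\le C_{\gamma,j}\langle x\rangle^{-m-d-\min(|\gamma|,j)}$ for all $\gamma$, the case $j=0$ being the previous step. For the step $j\to j+1$ and $|\alpha|\ge j+1$, in $(\partial^\alpha K_{\mathrm{low}})*F(u)$ one subtracts from $F(u)$ its Taylor polynomial of degree $j$ centred at $x$ (legitimate, on the principal part of the integral, by the vanishing moments), estimates the Taylor remainder through $|\partial^\delta F(u)(x)|\le C\langle x\rangle^{-2(m+d)-j}$ for $|\delta|=j+1$ (Fa\`a di Bruno and the level-$j$ hypothesis) together with the convergent integral $\int_{\rd}\langle z\rangle^{-m-d-|\alpha|}|z|^{j+1}\,dz<\infty$, and bounds the remaining pieces by the kernel decay; the high-frequency term is analogous, its Fourier transform vanishing to infinite order at $0$. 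This yields $|\partial^\alpha u(x)|\le C\bigl(\langle x\rangle^{-2(m+d)-j}+\langle x\rangle^{-m-d-(j+1)}\bigr)\le C\langle x\rangle^{-m-d-(j+1)}$, using $m+d\ge1$ in the last step. Taking $j\ge|\alpha|$ with $\alpha$ fixed gives \eqref{stime2}.

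The main obstacle is the sharp low-frequency kernel estimate. One expands $1/p(\xi)$ near $\xi=0$ into positively homogeneous terms of strictly increasing degrees $0<d_1<d_2<\cdots$ — the non-smooth ones starting, by the definition of the singularity index, at degree $m$ — and shows that the Fourier transform of $(i\xi)^\alpha\chi_0(\xi)\times(\text{term of degree }d_k)$ decays like $\langle x\rangle^{-(d_k+|\alpha|)-d}$, keeping track of the finitely many relevant terms and of a controllable remainder. This is the natural counterpart, with the extra factor $(i\xi)^\alpha$, of the estimates underlying Theorem~\ref{mainteo}: multiplying by $(i\xi)^\alpha$ raises the homogeneity degree of the leading singularity from $m$ to $m+|\alpha|$, which is precisely the source of the extra decay $\langle x\rangle^{-|\alpha|}$, and it is the vanishing moments — rather than a crude absolute-value convolution bound, which is lossy — that make it possible to actually extract this gain. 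The remaining ingredients (boundedness of the composition operator on $C^s_*$, the Fa\`a di Bruno bookkeeping, the fixed-point argument for the high-frequency term, and the convolution estimates) are routine.
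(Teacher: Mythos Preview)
Your outline is correct and would give a valid proof, but it follows a genuinely different route from the paper's argument. The paper establishes smoothness by a Sobolev bootstrap ($p(D)^{-1}:H^s\to H^{s+M}$, together with Schauder's lemma $u\in H^s\cap L^\infty\Rightarrow F(u)\in H^s$), and then proves \eqref{stime2} by a \emph{single} induction on $|\alpha|$ using the commutator identity
\[
x^\beta\partial^\alpha u=p(D)^{-1}[(\chi p)(D),x^\beta]\partial^\alpha u+p(D)^{-1}[((1-\chi)p)(D),x^\beta]\partial^\alpha u+p(D)^{-1}\big(x^\beta\partial^\alpha F(u)\big),
\]
combined with a Landau--Kolmogorov interpolation step to get a preliminary (suboptimal) decay for $\partial^\alpha u$ before closing the induction. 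No kernel moment structure is used; everything takes place at the level of the Banach algebra $\mathcal{F}L^\infty_{v_{m+d}}$ (Propositions~\ref{pro0}--\ref{pro4}). Your approach, by contrast, is kernel-based: you prove directly that $|\partial^\alpha K_{\mathrm{low}}(x)|\le C_\alpha\langle x\rangle^{-m-d-|\alpha|}$ and exploit the vanishing moments of $\partial^\alpha K_{\mathrm{low}}$ via a Taylor subtraction, running a two-level induction (first decay $\langle x\rangle^{-m-d}$ for all derivatives, then improvement of the exponent one unit at a time). The paper's commutator method is more algebraic and sidesteps the somewhat delicate homogeneous expansion of $1/p$ that you need for the sharp kernel bound; your method is more explicit and makes transparent where the extra factor $\langle x\rangle^{-|\alpha|}$ comes from (the zero of order $|\alpha|$ of $\widehat{\partial^\alpha K_{\mathrm{low}}}$ at $\xi=0$).

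One minor simplification: your ``fixed-point argument'' for the high-frequency piece is not needed. Since $K_{\mathrm{high}}\in L^1$ with rapid decay (its symbol lies in $S^{-M}$, $M>0$), you can always write $(\partial^\alpha K_{\mathrm{high}})*F(u)=K_{\mathrm{high}}*\partial^\alpha F(u)$ and apply Fa\`a di Bruno. In the first induction the leading term $F'(u)\partial^\alpha u$ is handled directly using $\partial^\alpha u\in L^\infty$ (already known from smoothness) and $|F'(u)|\le C\langle x\rangle^{-m-d}$; in the level-$j$ step the same term is bounded by $C\langle x\rangle^{-2(m+d)-j}\le C\langle x\rangle^{-m-d-(j+1)}$ since $m+d>1$. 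No contraction or absorption is required.
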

Here we mean that $F$ is smooth with respect to the structure of $\C$ as a real vector space, and $F'$ denotes its differential. Observe, in particular, that the above assumptions on $F$ imply \eqref{nonlin}.\par
As an immediate consequence of Theorem \ref{mainteo2}, we derive $L^p$ weighted estimates for the solution of \eqref{equation} also for $p \in [1,+\infty).$ In particular, for $p=2$ we recapture the results proved in \cite{CGRN} under the stronger assumption $[m]>d/2$ (whereas here we just require $m>0$).
\begin{theorem} \label{mainteo2bis}
Let $ p \in [1, +\infty)$. Under the assumption of Theorem \ref{mainteo2}, the solution $u$ of \eqref{equation} satisfies the estimates
\begin{equation}\label{stime3}
\|\langle x\rangle^{m+d(1-1/p)+|\alpha|-\varepsilon}\partial^\alpha u\|_{L^p}<\infty,\quad \alpha\in\mathbb{N}^d,
\end{equation}
for every $\varepsilon >0.$
\end{theorem}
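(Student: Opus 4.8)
The plan is to deduce Theorem~\ref{mainteo2bis} directly from the pointwise bounds of Theorem~\ref{mainteo2}, using only the elementary fact that a function decaying at infinity faster than $|x|^{-d/p}$ lies in $L^p(\R^d)$. Thus no further analysis of the equation \eqref{equation} is needed; the only point requiring a little care is the bookkeeping of exponents.

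First I would invoke Theorem~\ref{mainteo2}: under the present hypotheses $u$ is smooth and, for each $\alpha\in\N^d$, there is $C_\alpha>0$ with
\[
|\partial^\alpha u(x)|\le C_\alpha\,\langle x\rangle^{-(m+d+|\alpha|)},\qquad x\in\R^d.
\]
Multiplying by the weight $\langle x\rangle^{m+d(1-1/p)+|\alpha|-\varepsilon}$ and simplifying the powers gives
\[
\langle x\rangle^{m+d(1-1/p)+|\alpha|-\varepsilon}\,|\partial^\alpha u(x)|\le C_\alpha\,\langle x\rangle^{-d/p-\varepsilon},\qquad x\in\R^d.
\]
Raising to the $p$-th power and integrating,
\[
\int_{\R^d}\langle x\rangle^{-d-p\varepsilon}\,\ud x<\infty
\]
because $d+p\varepsilon>d$; hence $\langle x\rangle^{m+d(1-1/p)+|\alpha|-\varepsilon}\partial^\alpha u\in L^p(\R^d)$, which is precisely \eqref{stime3}. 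Note that the resulting constant depends on $\alpha$, $p$ and $\varepsilon$ and degenerates as $\varepsilon\to 0^+$, which explains why the $\varepsilon$-loss cannot be avoided with this argument.

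I expect essentially no obstacle here, since all the work is concentrated in Theorem~\ref{mainteo2}. One could alternatively argue by a H\"older/interpolation estimate of the type $\|\langle x\rangle^{w}\partial^\alpha u\|_{L^p}^p\le\|\langle x\rangle^{w}\partial^\alpha u\|_{L^\infty}^{p-1}\,\|\langle x\rangle^{w}\partial^\alpha u\|_{L^1}$ for a suitable weight $w$, but this would still require an $L^1$ bound with a weight and is no simpler than the direct pointwise estimate above. It is worth observing that the formal endpoint $p=\infty$ of \eqref{stime3} is just a slightly weakened form of \eqref{stime2} --- the factor $\langle x\rangle^{-\varepsilon}$ is precisely what is needed to make the tail integrable when $p<\infty$ --- so the genuinely new content of Theorem~\ref{mainteo2bis} is for $p\in[1,\infty)$; in particular, taking $p=2$ recovers the weighted $L^2$ estimate \eqref{weighted} of \cite{CGRN} under the weaker hypothesis $m>0$.
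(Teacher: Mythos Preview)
Your proof is correct and is exactly what the paper intends: the authors state that Theorem~\ref{mainteo2bis} ``is a direct consequence of Theorem~\ref{mainteo2}'' and leave the details to the reader, and you have supplied precisely those details via the pointwise bound \eqref{stime2} and the integrability of $\langle x\rangle^{-d-p\varepsilon}$.
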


The examples in Section 2 below show that the results above are completely sharp, both for the decay of $u$ and for that of its derivatives.\par
Finally, in the case of polynomial non-linearity one can also prove analytic regularity for the solution, in analogy with \cite{A3, BL1, BL2, A18, A39, CGR, CGR2}.

\begin{theorem}\label{mainteo3}
Assume \eqref{simbolo}, \eqref{ellitticita} and let $F$ be a polynomial in $u,\overline{u}$, with $F(0)=0$, $F'(0)=0$.
Let $u$ be a distribution solution of $p(D)u=F(u)$, satisfying $\langle x\rangle ^{\epsilon_0} u\in L^\infty(\R^d)$ for some $\epsilon_0>0$.\par
 Then there exists $\epsilon>0$ such that $u$ extends to a bounded holomorphic function $u(x+iy)$ in the strip $\{z=x+iy\in\mathbb{C}^d:\,|y|<\epsilon\}$.  
\end{theorem}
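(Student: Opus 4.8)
The plan is to pass to the Fourier transform and reduce the theorem to the estimate
\[
e^{a|\xi|}\langle\xi\rangle^{s}\,\widehat u(\xi)\in L^2(\rd)\qquad\text{for some }a>0\text{ and some fixed }s>d/2 .
\]
Granting this, for $a'\in(0,a)$ one has $e^{a'|\xi|}\widehat u\in L^1(\rd)$ by Cauchy--Schwarz, so that $u(x+iy):=(2\pi)^{-d}\int_{\rd}\widehat u(\xi)\,e^{i(x+iy)\cdot\xi}\,d\xi$ is well defined and holomorphic on $\{x+iy\in\mathbb{C}^d:\ |y|<a'\}$, is bounded there since $|u(x+iy)|\le(2\pi)^{-d}\int_{\rd}e^{|y|\,|\xi|}|\widehat u(\xi)|\,d\xi$, and agrees with $u$ on $\rd$ by Fourier inversion. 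Since the polynomial $F$ vanishes to second order, Theorem~\ref{mainteo2} applies and gives that $u$ is smooth and belongs to $H^\sigma(\rd)$ for every $\sigma\ge0$; hence $\widehat u$ is continuous and rapidly decreasing, and from $p(D)u=F(u)$ together with \eqref{ellitticita} (which yields $|1/p(\xi)|\le C\langle\xi\rangle^{-M}$) we get $\widehat u=\widehat{F(u)}/p$, i.e.\ $u=p(D)^{-1}F(u)$.

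To produce the estimate I would work with a \emph{truncated} exponential weight: for $a\ge0$ and $K\ge1$, set $\|v\|_{a,K}:=\|\,e^{a\min(|\xi|,K)}\langle\xi\rangle^{s}\widehat v\,\|_{L^2}$. The truncation makes these quantities a priori finite, $\|v\|_{a,K}\le e^{aK}\|v\|_{H^s}$, while every estimate below holds with constants independent of $K$. Two facts are used. First, since $\min(|\xi+\eta|,K)\le\min(|\xi|,K)+\min(|\eta|,K)$, a standard convolution estimate (splitting the $\eta$--integration into the regions $|\xi-\eta|\le|\eta|$ and $|\xi-\eta|\ge|\eta|$, and using $s>d/2$) shows that $\|\cdot\|_{a,K}$ is a Banach algebra norm for pointwise multiplication, with algebra constant independent of $a$ and $K$, and $\|\overline v\|_{a,K}=\|v\|_{a,K}$; writing $F(z)=\sum_{2\le j+k\le N}c_{jk}z^j\overline z^{\,k}$, this yields $\|F(v)\|_{a,K}\le\Phi(\|v\|_{a,K})$ for a polynomial $\Phi$ with nonnegative coefficients and $\Phi(t)=O(t^2)$ as $t\to0^+$. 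Second, \eqref{ellitticita} gives, for $R\ge1$ and $\Pi_{>R}$ the Fourier projection onto $\{|\xi|>R\}$,
\[
\|\Pi_{>R}\,p(D)^{-1}g\|_{a,K}\le C\,R^{-M}\|g\|_{a,K}.
\]

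Now comes the main step. Fix $\theta:=M/(2N)$ and, for $R\ge1$, choose $a=a(R):=\theta(\log R)/R$, so that $e^{a(R)R}=R^{\theta}$; split $u=v+w$ with $\widehat v=\mathbf 1_{\{|\xi|\le R\}}\widehat u$ and $w=\Pi_{>R}u$. The band-limited part $v$ extends, by Paley--Wiener, to an entire function that is bounded on every strip, hence is irrelevant. For $w$ one has $w=\Pi_{>R}p(D)^{-1}F(u)$, so the two facts above, together with $\|v\|_{a,K}\le e^{aR}\|u\|_{H^s}=R^{\theta}\|u\|_{H^s}=:\beta$, give for every $K$
\[
\|u\|_{a,K}\le\|v\|_{a,K}+\|w\|_{a,K}\le\beta+C\,R^{-M}\,\Phi\big(\|u\|_{a,K}\big)=:\Psi\big(\|u\|_{a,K}\big).
\]
Because $\theta N=M/2<M$, one checks that $\Psi(\beta)>\beta$ while $\Psi(2\beta)<2\beta$ for $R$ large, so $\Psi$ has fixed points $\beta<y_*<2\beta<y_{**}$; since $K\mapsto\|u\|_{a,K}$ is finite, continuous and nondecreasing, with $\|u\|_{a,1}\le e^{a(R)}\|u\|_{H^s}<\beta$ for $R$ large, the displayed inequality and a standard continuation argument in $K$ force $\|u\|_{a,K}\le y_*<2\beta$ for all $K\ge1$. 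Letting $K\to\infty$ yields $\|e^{a(R_0)|\xi|}\langle\xi\rangle^{s}\widehat u\|_{L^2}<\infty$ for one fixed large $R_0$, which is the estimate sought, with $\epsilon=a(R_0)/2$.

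\textbf{Main obstacle.} The delicate point is precisely the scaling in the last step. One cannot keep $a$ fixed: then the low-frequency leakage $\|v\|_{a,K}\lesssim e^{aR}\|u\|_{H^s}$ grows exponentially in $R$ and overwhelms the smoothing gain $R^{-M}$; letting $a=a(R)=\theta(\log R)/R$ decay slowly makes that leakage only polynomial, $R^{\theta}$, and the choice $\theta<M/N$ ensures that even after the polynomial nonlinearity raises it to the $N$-th power the net factor $R^{-M}\!\cdot\! R^{N\theta}=R^{-M/2}$ still tends to $0$; this is what lets the continuation argument close and produces a positive (if tiny) width of analyticity. The auxiliary truncation (needed only so that the quantities $\|u\|_{a,K}$ are a priori finite), the uniform algebra property of $\|\cdot\|_{a,K}$, the Paley--Wiener remark, and the regularity input $u\in H^\sigma(\rd)$ for all $\sigma$ — the last being Theorem~\ref{mainteo2} — are all routine.
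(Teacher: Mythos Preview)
Your argument is correct and takes a genuinely different route from the paper. The paper works in physical space: it introduces the truncated analytic energies $\mathcal{E}_N^\delta[u]=\sum_{|\alpha|\le N}\frac{\delta^{|\alpha|}}{\alpha!}\|\partial^\alpha u\|_{H^s}$, differentiates the equation $u=p(D)^{-1}(u^k)$, and exploits the elementary consequence of \eqref{ellitticita} that $|p(\xi)^{-1}|\le\tau+C_\tau\langle\xi\rangle^{-1}$ for every $\tau>0$; choosing $\tau$ small absorbs the top-order term and yields a recursion of the form $\mathcal{E}_N^\delta\le \|u\|_{H^s}+C'(\tau+\delta C_\tau)(\mathcal{E}_{N-1}^\delta)^k$, which is then closed by taking $\delta$ small. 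Your approach, by contrast, stays on the Fourier side with the truncated exponential weights $e^{a\min(|\xi|,K)}\langle\xi\rangle^s$, replaces the recursion in $N$ by a continuation/barrier argument in $K$, and replaces the $\tau$-smallness device by a frequency split at level $R$ combined with the scaling $a(R)=\theta(\log R)/R$. The paper's proof is shorter and more direct---no balancing of scales is needed, and the iteration is immediate once $\tau,\delta$ are fixed---whereas yours avoids all the Leibniz/Fa\`a di Bruno combinatorics, packaging the nonlinear estimate into the single algebra inequality $\|F(u)\|_{a,K}\le\Phi(\|u\|_{a,K})$; in particular your argument would extend verbatim to real-analytic (not merely polynomial) nonlinearities, as long as the majorant series $\Phi$ converges on the relevant range.
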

Actually the estimates \eqref{stime2} suggest a stronger result, namely that $u$ extends to a holomorphic function to a sector. This is also confirmed by the examples in Section 2, but the proof seems to be extremely technical, cf. \cite{CN1, CN2}, and therefore we prefer to postpone this issue to a future paper.

\section{Examples}
In this section we provide some examples showing the sharpness of our decay estimates. 
First of all we recall the following formula for the Fourier transform of functions of the form $(1+|x|^2)^{-\lambda}, $ $ \lambda >0$ (see for example \cite{Schwartz}, formula (VII, 7;23), page 260, or \cite{GS1}, formula (9), page 187).
\begin{equation}\label{F}
\mathcal{F}((1+|x|^2)^{-\lambda})(\xi)= \frac{2\pi^{d/2}}{\Gamma(\lambda)} \left( \frac{|\xi|}{2}\right)^{\lambda-\frac{d}{2}} K_{\lambda-\frac{d}{2}}(|\xi|),
\end{equation}
where $x,\xi \in \R^d$, $\Gamma$ denotes the standard Euler Gamma function and $K_\nu(x), \nu \in \R, x \in \R \setminus 0$, are the modified Bessel functions of second type, see \cite{Erd, Watson} for definitions and properties. Here we just recall that
\begin{equation} \label{K1}
K_\nu(x)= K_{-\nu}(x), \qquad \nu \in \R, x \neq 0,
\end{equation}
\begin{equation} \label{K2}
K_{\nu+1}(x)=\frac{2\nu}{x}K_\nu(x) + K_{\nu-1}(x), \qquad \nu \in \R, x \neq 0.
\end{equation}

Using the properties \eqref{F}, \eqref{K1}, \eqref{K2}, an alternative proof of the fact that the function $$u(x)= \frac1{1+x^2}$$ is a solution of the Benjamin-Ono equation \eqref{BO2} has been given in \cite{CGRN}. We repeat here the proof for the benefit of the reader since it is very short. In fact we have 

\begin{align} \label{2.15}
\mathcal{F}(u^2)&=4\mathcal{F}((1+x^2)^{-2})= 8\sqrt{\pi}\left(\frac{|\xi|}{2}\right)^{3/2}K_{\frac{3}{2}}(|\xi|) \\
&=4\sqrt{\pi}(|\xi|+1)\left(\frac{|\xi|}{2}\right)^{1/2}K_{\frac{1}{2}}(x)\nonumber \\ &= 2(|\xi|+1) \mathcal{F}((1+x^2)^{-1}) \nonumber \\ &= \mathcal{F}(|D|u+u).\nonumber
\end{align}

Using \eqref{F} many other examples of equations of the form \eqref{equation}, \eqref{simbolo} admitting solutions of the form $(1+|x|^2)^{-\lambda}$ for some $\lambda >0$, can be constructed. We provide here a couple of such examples. The first one is taken from the unpublished manuscript \cite{GVS}. We describe this model in detail for the sake of completeness.
\\

\noindent \textbf{Example 1.} Consider  in dimension $d=1$ the equation
\begin{equation}
|D|^3+6D^2 u + 15|D|u+15u =48u^4
\end{equation}
which is of the form \eqref{equation}, \eqref{simbolo} with $m=1, d=1$. Moreover the condition \eqref{ellitticita} is satisfied. Using \eqref{F}, \eqref{K1}, \eqref{K2} it is easy to prove that it admits the solution
$$ u(x)=\frac{1}{1+x^2}, \qquad  x\in \R.$$
As a matter of fact we have
\begin{align}\label{Fex} K_{\frac72}(x)&= \frac{5}{x}K_{\frac52}(x)+K_{\frac32} (x)\\
&=  \frac{5}{x} \left( \frac{3}{x^2}+\frac{3}{x}+1 \right)K_{\frac{1}{2}}(x) +\left(\frac{1}{x}+1 \right)K_{\frac12}(x) 
 \nonumber \\ &= \left( \frac{15}{x^3}+\frac{15}{x^2}+\frac{6}{x}+1 \right) K_{\frac12}(x), \qquad x \neq 0. \nonumber
\end{align}
Then from \eqref{F} and \eqref{Fex}
\begin{align*}
48\mathcal{F} (u^4) &= 48 \mathcal{F}((1+x^2)^{-4}) = \frac{96\sqrt{\pi}}{\Gamma(4)} \left( \frac{|\xi|}{2}\right)^{\frac72} K_{\frac72}(|\xi|) \\ 
&= 2\sqrt{\pi} (|\xi|^3+6|\xi|^2+15|\xi|+15) \left( \frac{|\xi|}{2}\right)^{\frac12}K_{\frac12}(|\xi|) \\
&= (|\xi|^3+6|\xi|^2+15|\xi|+15) \mathcal{F}((1+x^2)^{-1})\\
&= \mathcal{F}(|D|^3 u +6D^2 u+15|D|u+15u).
\end{align*}
The example above, compared with the Benjamin-Ono equation, confirms that the presence of terms with finite but stronger regularity than $|\xi|^m$ in the symbol of the linear part do not have any influence on the decay of the solution, which is determined only by the singularity index.

The next example in dimension $d=3$ has been proposed in \cite{CGRN} as a model which did not satisfy the condition $[m]>d/2$ required in \cite{CGRN} but exhibiting the right decay properties as well. We recall it here since our weaker condition $m>0$ allows to include it completely in our results.
\\

\noindent \textbf{Example 2.} Consider the equation
\begin{equation} \label{tridim}
-\Delta u +3 (-\Delta)^{1/2}u +3u =24 u^2,
\end{equation}
where we denote by $(-\Delta)^{1/2}=|D|$ the Fourier multiplier with symbol $|\xi|, \xi \in \R^3$. The linear part of \eqref{tridim} satisfies the condition \eqref{ellitticita} and the singularity index is $m=1$. Moreover the function
$$u(x)= \frac{1}{(1+|x|^2)^2}, \qquad x \in \R^3$$
is a solution of \eqref{tridim}. In fact we have
\begin{align*}
24 \mathcal{F}(u^2) &= 24 \mathcal{F}((1+|x|^2)^{-4})\\
&= 8\pi^{\frac{3}{2}} \left( \frac{|\xi|}{2}\right)^{\frac{5}{2}}K_{\frac{5}{2}}(|\xi|) \\
&= (|\xi|^2+3|\xi|+3) \mathcal{F}((1+|x|^2)^{-2})\\
&= \mathcal{F}(-\Delta u +3 (-\Delta)^{1/2}u+3u). 
\end{align*}

\section{Notation and preliminary results}
For $s\geq0$, define the weight functions
\[
v_s(x)=\langle x\rangle^s,\ x\in\rd,
\]
and the weighted Lebesgue spaces
\[
L^\infty_{v_s}=\{u\in L^\infty(\rd):\ \|u\|_{L^\infty_{v_s}}:=\|v_s u\|_{L^\infty}<\infty\}.
\]
We will use often the following well-known interpolation inequality.
\begin{proposition}\label{interp}
Given $0\leq \ell\leq n$, there exists a constant $C>0$ such that the following inequality holds.
Let $I=I_1\times\ldots\times I_d\subset\rd$, where each $I_j$, $j=1,\ldots,d$, is an interval of the form $[a,+\infty)$ or $(-\infty,a]$. Then
\begin{equation}\label{inter}
\|D^\ell u\|_{L^\infty(I)}\leq C\|u\|_{L^\infty(I)}^{1-\ell/n}\|D^n u\|_{L^\infty(I)}^{\ell/n},
\end{equation}
where we set $\|D^k u\|_{L^\infty(I)}:=\sup_{|\alpha|=k}\|\partial^\alpha u\|_{L^\infty(I)}$, $k\in\mathbb{N}$. \par
In particular, if $u\in L^\infty_{v_s}$ and $D^n u\in L^\infty_{v_r}$ then $D^l u\in L^\infty_{v_\nu}$ with $\nu=(1-\ell/n)s+(\ell/n)r$.

\end{proposition}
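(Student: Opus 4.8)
The plan is to reduce the whole statement to the one-dimensional Landau--Kolmogorov inequality on a half-line, namely that for $f\in C^n(J)$ with $J=[a,+\infty)$ or $J=(-\infty,a]$ one has $\|f^{(\ell)}\|_{L^\infty(J)}\le C_{\ell,n}\|f\|_{L^\infty(J)}^{1-\ell/n}\|f^{(n)}\|_{L^\infty(J)}^{\ell/n}$. I would recall its proof at least in the case $n=2$, which is all that is really needed below: writing $f(x\pm h)=f(x)\pm h f'(x)+\tfrac12 h^2 f''(\xi_\pm)$ gives $|f'(x)|\le 2h^{-1}\|f\|_{L^\infty(J)}+\tfrac{h}{2}\|f''\|_{L^\infty(J)}$, and optimizing over $h>0$ yields $\|f'\|_{L^\infty(J)}\le 2\|f\|_{L^\infty(J)}^{1/2}\|f''\|_{L^\infty(J)}^{1/2}$; the general case follows by the analogous argument based on the Newton finite-difference formula expressing $f^{(\ell)}(x)$ through $f(x),f(x+h),\dots,f(x+(n-1)h)$ with an integral remainder in $f^{(n)}$. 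It is exactly the half-line structure of $J$ that permits one to choose the sign of the step $h$ so that all evaluation points $x\pm h$, $x+jh$ stay in $J$; this is why the admissible domains are products of half-lines.

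For the multidimensional inequality \eqref{inter} on a fixed $I=I_1\times\cdots\times I_d$, set $M_k:=\|D^k u\|_{L^\infty(I)}=\sup_{|\beta|=k}\|\partial^\beta u\|_{L^\infty(I)}$. First I would check that $M_0,M_n<\infty$ forces $M_k<\infty$ for $0\le k\le n$: for a multi-index $\beta$ with $0<|\beta|<n$ and $\beta_j\ge1$, applying the one-dimensional bound in the variable $x_j$ to $x_j\mapsto\partial^{\beta-\beta_j e_j}u(x)$ (with top order chosen as $n-|\beta|+\beta_j$) estimates $\|\partial^\beta u\|_{L^\infty(I)}$ by a product of powers of $\|\partial^{\beta-\beta_j e_j}u\|_{L^\infty(I)}$ (order $<|\beta|$) and of an order-$n$ derivative (bounded by $M_n$), so finiteness propagates by induction on $|\beta|$. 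Next comes the key step: the ``three-point'' inequalities $M_k^2\le 4\,M_{k-1}M_{k+1}$ for $1\le k\le n-1$. Given $|\beta|=k$, write $\beta=\gamma+e_j$, freeze all variables but $x_j$, and apply the one-dimensional case $\ell=1$, $n=2$ on the half-line $I_j$ to $x_j\mapsto\partial^\gamma u(x)$; this gives $\|\partial^\beta u\|_{L^\infty(I)}\le 2\|\partial^\gamma u\|_{L^\infty(I)}^{1/2}\|\partial_j^2\partial^\gamma u\|_{L^\infty(I)}^{1/2}\le 2M_{k-1}^{1/2}M_{k+1}^{1/2}$, and one takes the supremum over $\beta$. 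Finally, these inequalities mean (discarding the trivial case $u\equiv0$ on $I$) that $a_k:=\log M_k$ satisfies $a_{k+1}-2a_k+a_{k-1}\ge-\log4$, so $b_k:=a_k+(\log2)k^2$ is convex on $\{0,\dots,n\}$; then $b_\ell\le(1-\ell/n)b_0+(\ell/n)b_n$ unwinds, using $\ell(n-\ell)\le n^2/4$, to \eqref{inter} with $C=C(\ell,n)$.

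For the weighted assertion I would apply \eqref{inter}, for each $x_0\in\rd$, on the orthant $I^{(x_0)}=\prod_{j=1}^d I_j$ with vertex at $x_0$, where $I_j=[x_{0,j},+\infty)$ if $x_{0,j}\ge0$ and $I_j=(-\infty,x_{0,j}]$ otherwise. This is of the admissible product form, and every $x\in I^{(x_0)}$ satisfies $|x_j|\ge|x_{0,j}|$ for each $j$, hence $\langle x\rangle\ge\langle x_0\rangle$; since $s,r\ge0$ this gives $\|u\|_{L^\infty(I^{(x_0)})}\le\langle x_0\rangle^{-s}\|v_s u\|_{L^\infty}$ and $\|D^n u\|_{L^\infty(I^{(x_0)})}\le\langle x_0\rangle^{-r}\sup_{|\beta|=n}\|v_r\partial^\beta u\|_{L^\infty}$. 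Inserting these into \eqref{inter} and evaluating at $x_0$ yields $|\partial^\beta u(x_0)|\le C\langle x_0\rangle^{-\nu}$ for all $|\beta|=\ell$, with $\nu=(1-\ell/n)s+(\ell/n)r$; since $x_0$ is arbitrary, $D^\ell u\in L^\infty_{v_\nu}$.

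The only genuinely analytic input is the short one-dimensional estimate for $n=2$; I expect the steps requiring the most care to be bookkeeping rather than ideas: the propagation of finiteness to the intermediate orders $M_k$, the log-convexity manipulation with the additive constant, and the verification that the orthant $I^{(x_0)}$ is simultaneously of the admissible type and a region on which $\langle x\rangle\ge\langle x_0\rangle$.
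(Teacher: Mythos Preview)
Your argument is correct, and your treatment of the weighted consequence is exactly the one in the paper: apply \eqref{inter} on the orthant $I^{(x_0)}$ with vertex at $x_0$, use $\langle x\rangle\ge\langle x_0\rangle$ on that orthant, and evaluate at $x_0$.

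Where you genuinely diverge from the paper is in the derivation of the multidimensional multiplicative inequality. The paper first passes from the one-dimensional Landau--Kolmogorov inequality to its \emph{additive} consequence $\|D^\ell u\|_{L^\infty(I)}\le C\|u\|_{L^\infty(I)}+C\|D^n u\|_{L^\infty(I)}$ via Young's inequality, iterates this to obtain the additive version in $\rd$ on orthants at the origin, and then recovers the multiplicative form by the standard rescaling $u\mapsto u(\epsilon\,\cdot)$ and optimization in $\epsilon$; a translation finally gives general $I$. You instead stay multiplicative throughout: you reduce to the single one-dimensional case $\ell=1$, $n=2$, prove the three-term inequalities $M_k^2\le 4M_{k-1}M_{k+1}$, and then invoke log-convexity of $k\mapsto\log M_k+(\log 2)k^2$ to interpolate. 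Your route is more self-contained (only the easy $n=2$ half-line estimate is used, and you sketch it) and yields the explicit constant $C=2^{\ell(n-\ell)}$; the paper's route is a bit slicker once one is willing to quote the full one-dimensional Landau--Kolmogorov inequality, since the scaling trick handles all $(\ell,n)$ at once without any convexity bookkeeping. One small addendum to your ``discarding the trivial case $u\equiv0$'': if $M_n=0$ then $u$ is a polynomial of degree $<n$ bounded on the unbounded product of half-lines $I$, hence constant, so $M_\ell=0$ for $\ell\ge1$ and \eqref{inter} holds trivially; this is the only other degenerate case to set aside before taking logarithms.
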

\begin{proof}
In the case $d=1$, $I=[0,+\infty)$ or $I=(-\infty,0]$, formula \eqref{inter} is known as Landau-Kolmogorov inequality. By Young inequality we also have 
\[
\|D^\ell u\|_{L^\infty(I)}\leq C\|u\|_{L^\infty(I)}+C\|D^n u\|_{L^\infty(I)},
\]
in that case. A repeated application of this inequality shows that it also holds in the multi-dimensional case, with $I$ as in the statement and each $I_j$ of the form $[0,+\infty)$ or $(-\infty,0]$. Now, we can apply this estimate to $u(\epsilon x)$, $\epsilon>0$ and optimize in $\epsilon$ (cf.\ \cite[Proposition 3.4]{taylor2}), obtaining the desired conclusion, namely \eqref{inter}, for sets $I$ of this special form. The general case in the statement can be obtained simply by a translation. \par
For the last statement, one can estimate, for $|\alpha|=\ell$,
\[
\langle x\rangle^{\nu}|\partial^\alpha u(x)|\leq \langle x\rangle^{\nu} \|D^\ell u\|_{L^\infty(I)}
\]
when $x=(x_1,\ldots,x_d)$, $x_1\geq 0,\ldots x_d\geq 0$, and $I=[x_1,+\infty)\times\ldots\times[x_d,+\infty)$ and then apply \eqref{inter}. If e.g. $x_1<0$ one considers instead the interval $(-\infty,x_1]$, and similarly for the other components. 
\end{proof}

We also define the space 
\[
\Fur L^\infty_{v_s}=\{u\in S'(\rd):\ \|u\|_{\Fur L^\infty_{v_s}}:=\|\widehat{u}\|_{L^\infty_{v_s}}<\infty\},
\]
where
\[
\Fur u(\xi)=\widehat{u}(\xi)=\int_{\rd} e^{-ix\xi}u(x)\, dx
\]
is the Fourier transform of $u$.\par
We recall the following result (see e.g. \cite[Proposition 11.1.3 (b)]{book}).
\begin{proposition}\label{pro0}
If $s>d$, the space $L^\infty_{v_s}$ is a convolution algebra and 
\[
\|u\ast v\|_{L^\infty_{v_s}}\leq C_s \|u\|_{L^\infty_{v_s}}\|v\|_{L^\infty_{v_s}}
\] 
for some constant $C_s>0$.\par
As a consequence, $\Fur L^\infty_{v_s}$ is an algebra with respect to pointwise multiplication, and 
\[
\|u v\|_{\Fur L^\infty_{v_s}}\leq C_s \|u\|_{\Fur L^\infty_{v_s}}\|v\|_{\Fur L^\infty_{v_s}}.
\] 
\end{proposition}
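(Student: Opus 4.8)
The plan is to reduce both claims to two elementary properties of the weight $v_s(x)=\langle x\rangle^s$: the subadditivity-type bound
\[
\langle x\rangle^s\le 2^s\big(\langle x-y\rangle^s+\langle y\rangle^s\big),\qquad x,y\in\rd,\ s\ge0,
\]
which follows from the triangle inequality together with the elementary estimate $(a+b)^{s/2}\le 2^{s/2}(a^{s/2}+b^{s/2})$; and the integrability $A_s:=\int_{\rd}\langle y\rangle^{-s}\,dy<\infty$, which holds precisely because $s>d$.

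For the convolution estimate, given $u,v\in L^\infty_{v_s}$ I would bound, for a.e.\ $x\in\rd$,
\[
\langle x\rangle^s|(u\ast v)(x)|\le \|u\|_{L^\infty_{v_s}}\|v\|_{L^\infty_{v_s}}\int_{\rd}\frac{\langle x\rangle^s}{\langle x-y\rangle^s\langle y\rangle^s}\,dy,
\]
and then insert the inequality above for $\langle x\rangle^s$ in the integrand; this splits the integral into $\int_{\rd}\langle y\rangle^{-s}\,dy$ and $\int_{\rd}\langle x-y\rangle^{-s}\,dy$, each equal to $A_s$ after a translation. Hence $\|u\ast v\|_{L^\infty_{v_s}}\le C_s\|u\|_{L^\infty_{v_s}}\|v\|_{L^\infty_{v_s}}$ with $C_s=2^{s+1}A_s$, and since convolution is commutative and associative this makes $L^\infty_{v_s}$ a convolution algebra.

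For the second statement I would pass to the Fourier side. Since $s>d$ we have $L^\infty_{v_s}\subset L^1(\rd)$ (integrate $\langle\cdot\rangle^{-s}$), so if $\widehat u,\widehat v\in L^\infty_{v_s}$ then $u,v\in\Fur L^1\subset L^\infty(\rd)$, $uv$ is a tempered distribution, and $\widehat{uv}=(2\pi)^{-d}\,\widehat u\ast\widehat v$. By the first part $\widehat u\ast\widehat v\in L^\infty_{v_s}$ with the corresponding bound, so $uv\in\Fur L^\infty_{v_s}$ and
\[
\|uv\|_{\Fur L^\infty_{v_s}}\le (2\pi)^{-d}C_s\,\|u\|_{\Fur L^\infty_{v_s}}\|v\|_{\Fur L^\infty_{v_s}};
\]
renaming the constant gives the asserted inequality.

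No genuine obstacle is expected here; the result is classical and is recorded only for the reader's convenience. The single point deserving a little care is verifying the constant in the subadditivity bound for $\langle x\rangle^s$ at general $s\ge 0$ (equivalently, one could avoid that bound and instead split the integration variable according to whether $|y|\le|x|/2$ or $|y|>|x|/2$, using $\langle x\rangle\le C\langle x-y\rangle$ on the first region and $\langle x\rangle\le C\langle y\rangle$ on the second); everything else is a direct computation.
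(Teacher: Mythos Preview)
Your argument is correct: the subadditivity bound $\langle x\rangle^s\le C_s(\langle x-y\rangle^s+\langle y\rangle^s)$ together with the integrability of $\langle\cdot\rangle^{-s}$ for $s>d$ gives the convolution estimate directly, and passing to the Fourier side via $\widehat{uv}=(2\pi)^{-d}\,\widehat u\ast\widehat v$ (legitimate since $L^\infty_{v_s}\subset L^1$) yields the multiplication algebra statement. The paper itself does not prove this proposition at all; it merely quotes it from Gr\"ochenig's book \cite[Proposition 11.1.3 (b)]{book}, so there is no in-paper argument to compare against, and your self-contained proof is a perfectly adequate replacement for the citation.
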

Possibly after passing to equivalent norms we can suppose that the above inequalities hold with $C_s=1$, so that $\Fur L^\infty_{v_s}$ becomes a Banach algebra.\par
We will use the following easy criterion of continuity on $L^\infty_{v_s}$ for a convolution operator. We report on the proof for the benefit of the reader.
\begin{proposition}\label{pro1}
Any convolution operator with integral kernel $K\in L^\infty_{v_s}$, with $s>d$, is bounded on $L^\infty_{v_r}$ for every $0\leq r\leq s$.
\end{proposition}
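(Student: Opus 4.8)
The plan is to reduce the continuity statement on $L^\infty_{v_r}$ to the membership $K\in L^\infty_{v_s}$ by a weighted Young-type estimate, exploiting the submultiplicativity of the weight $v_s$ recorded in Proposition~\ref{pro0}. Recall that the convolution operator is $Tu=K\ast u$, i.e. $(Tu)(x)=\int_{\rd} K(x-y)u(y)\,dy$. I want to bound $\langle x\rangle^r|(Tu)(x)|$ by a constant times $\|u\|_{L^\infty_{v_r}}$.

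First I would write, for $u\in L^\infty_{v_r}$,
\[
\langle x\rangle^r |(Tu)(x)|\leq \int_{\rd} \langle x\rangle^r |K(x-y)|\,|u(y)|\,dy
=\int_{\rd} \langle x\rangle^r |K(x-y)|\,\langle y\rangle^{-r}\,\big(\langle y\rangle^r|u(y)|\big)\,dy.
\]
Since $0\le r\le s$ and $\langle x\rangle\le C\langle x-y\rangle\langle y\rangle$ (Peetre's inequality, or just the triangle inequality for $\langle\cdot\rangle$), we get $\langle x\rangle^r\langle y\rangle^{-r}\le C^r\langle x-y\rangle^r\le C^r\langle x-y\rangle^s$ whenever $\langle x-y\rangle\ge 1$, and trivially $\langle x\rangle^r\langle y\rangle^{-r}\le C^r$ anyway on the region $\langle x-y\rangle\le 1$; in all cases $\langle x\rangle^r\langle y\rangle^{-r}\le C^r\langle x-y\rangle^s$. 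Hence
\[
\langle x\rangle^r|(Tu)(x)|\leq C^r \|u\|_{L^\infty_{v_r}}\int_{\rd}\langle x-y\rangle^s |K(x-y)|\,dy
= C^r \|u\|_{L^\infty_{v_r}}\,\|v_s K\|_{L^1}.
\]
Finally, because $s>d$, $\langle\cdot\rangle^{-(s-d)-\epsilon}$ is integrable for small $\epsilon>0$ with $s-\epsilon>d$, so $\|v_s K\|_{L^1}\le \|v_{s}K\|_{L^\infty}\,\|v_{-s+\epsilon'}\|_{L^1}<\infty$ for a suitable $\epsilon'$—more directly, $\|v_s K\|_{L^1}\le \|K\|_{L^\infty_{v_s}}\int_{\rd}\langle z\rangle^{-s}\,dz<\infty$. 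This gives $\|Tu\|_{L^\infty_{v_r}}\le C\|K\|_{L^\infty_{v_s}}\|u\|_{L^\infty_{v_r}}$, as claimed.

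There is essentially no hard step here; the only mild point requiring care is the interplay between the exponents, namely that the \emph{same} weight exponent $s$ simultaneously (i) controls the ratio $\langle x\rangle^r/\langle y\rangle^r$ through Peetre's inequality for all $r\le s$, and (ii) is large enough ($s>d$) to make $\langle\cdot\rangle^{-s}$ integrable, so that the ``gain'' from $K\in L^\infty_{v_s}$ is exactly what is needed to absorb the weight loss coming from the shift $y\mapsto x$. I would state Peetre's inequality explicitly at the start if it has not been used before, then the computation above is a two-line display.
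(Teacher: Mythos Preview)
There is a genuine gap at the end of your argument. From $K\in L^\infty_{v_s}$ you only know $\langle z\rangle^s|K(z)|\le\|K\|_{L^\infty_{v_s}}$, i.e.\ $|K(z)|\le\|K\|_{L^\infty_{v_s}}\langle z\rangle^{-s}$. Hence $\langle z\rangle^s|K(z)|$ is merely \emph{bounded}, not integrable, and the claimed inequality $\|v_sK\|_{L^1}\le\|K\|_{L^\infty_{v_s}}\int_{\rd}\langle z\rangle^{-s}\,dz$ is false: it would require $|K(z)|\le C\langle z\rangle^{-2s}$, which is not given. So the display $\int\langle x-y\rangle^s|K(x-y)|\,dy<\infty$ does not follow.

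The root of the problem is that Peetre's inequality alone is too lossy here. If instead of inflating $\langle x-y\rangle^r$ to $\langle x-y\rangle^s$ you keep the sharp bound $\langle x\rangle^r\langle y\rangle^{-r}\le C^r\langle x-y\rangle^r$, the remaining integral is $\int\langle z\rangle^{r}|K(z)|\,dz\le\|K\|_{L^\infty_{v_s}}\int\langle z\rangle^{r-s}\,dz$, which converges only when $r<s-d$. Thus the Peetre route covers the range $0\le r<s-d$ but \emph{fails precisely for $s-d\le r\le s$}, which is part of the statement. The paper's proof avoids this loss by not using Peetre at all: it splits the domain of integration into $\{\langle y\rangle\ge\langle x\rangle\}$, where $\langle x\rangle^r/\langle y\rangle^r\le 1$, and $\{\langle y\rangle<\langle x\rangle\}$, where $\langle x\rangle^r/\langle y\rangle^r\le\langle x\rangle^s/\langle y\rangle^s$; the first piece is then $\int\langle x-y\rangle^{-s}\,dy<\infty$ and the second reduces to the case $r=s$, which is exactly the convolution-algebra estimate of Proposition~\ref{pro0}. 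You need either this splitting or an explicit appeal to the $r=s$ case to close the gap.
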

\begin{proof} It suffices to prove the following estimate:
\[
\sup_{x\in\rd}\int_{\R^{d}}\frac{1}{\langle x-y\rangle^s}\frac{\langle x\rangle^r}{\langle y\rangle^r}\, dy<\infty.
\]
This is true if $r=0$, because $s>d$. When $r=s(>d)$ the inequality is proved e.g. in \cite[Formula (11.5)]{book}. For the intermediate values of $r$ we split the above integral as follows
\begin{align*}
&\int_{\langle y\rangle\geq\langle x\rangle}\frac{1}{\langle x-y\rangle^s}\underbrace{\frac{\langle x\rangle^r}{\langle y\rangle^r}}_{\leq 1}\, dy+\int_{\langle y\rangle<\langle x\rangle}\frac{1}{\langle x-y\rangle^s}\underbrace{\frac{\langle x\rangle^r}{\langle y\rangle^r}}_{\leq \frac{\langle x\rangle^s}{\langle y\rangle^s}}\, dy\\
&\leq \int_{\mathbf{R}^{d}}\frac{1}{\langle x-y\rangle^s} dy+ \int_{\mathbf{R}^{d}}\frac{1}{\langle x-y\rangle^s}\frac{\langle x\rangle^s}{\langle y\rangle^s}\, dy\leq C.
\end{align*}
\end{proof}

Finally we recall the following result, which follows easily from the theory of homogeneous distributions.
\begin{proposition}\label{pro4}
Let $f\in C^\infty(\rd\setminus\{0\})$ be positively homogeneous of degree $r>0$ and $\chi\in C^\infty_0(\rd)$. There exists a constant $C>0$ such that
\[
|\widehat{\chi f}(\xi)|\leq C(1+|\xi|)^{-r-d},\quad \xi\in\rd,
\]
i.e. $\chi f\in \Fur L^\infty_{v_{r+d}}$.
 \end{proposition}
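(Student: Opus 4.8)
The plan is to work on the Fourier side and to use the classical fact that the Fourier transform of a homogeneous distribution which is smooth off the origin is again homogeneous, of complementary degree, and smooth off the origin. Since $r>0$, the function $f$ is locally integrable and polynomially bounded, hence $f\in \mathcal{S}'(\rd)$; by the theory of homogeneous distributions (e.g. H\"ormander, \emph{The Analysis of Linear Partial Differential Operators I}, Thm.~7.1.18) the distribution $\widehat f$ is positively homogeneous of degree $-r-d$ and belongs to $C^\infty(\rd\setminus\{0\})$. Since $\chi\in C^\infty_0(\rd)\subset\mathcal{S}(\rd)$, one has $\widehat{\chi f}=(2\pi)^{-d}\,\widehat\chi\ast\widehat f$, so the whole matter reduces to estimating this convolution for large $|\xi|$.

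Next I would localize $\widehat f$ near the origin: fix $\rho\in C^\infty_0(\rd)$ with $\rho\equiv1$ on a neighbourhood of $0$ and write $\widehat f=\rho\widehat f+(1-\rho)\widehat f$. The second summand is a genuine $C^\infty$ function, it vanishes near $0$, it is continuous (hence bounded) on the compact set $\operatorname{supp}\rho$, and for $|\eta|$ large it coincides with the homogeneous $\widehat f$, so that $|(1-\rho)\widehat f(\eta)|\le C|\eta|^{-r-d}$ there; altogether $(1-\rho)\widehat f\in L^\infty_{v_{r+d}}$. Since $\widehat\chi\in\mathcal{S}(\rd)\subset L^\infty_{v_{r+d}}$ and $r+d>d$, Proposition \ref{pro0} (the convolution algebra property of $L^\infty_{v_s}$ for $s>d$) gives $\widehat\chi\ast[(1-\rho)\widehat f]\in L^\infty_{v_{r+d}}$, i.e. $|\widehat\chi\ast[(1-\rho)\widehat f](\xi)|\le C\langle\xi\rangle^{-r-d}$.

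For the singular piece $T:=\rho\widehat f$, which is a distribution with compact support, I would use that $T$ has finite order: there are $C_0>0$, $k\in\N$ and a fixed compact $K\supset\operatorname{supp}\rho$ such that $|\langle T,\phi\rangle|\le C_0\sum_{|\alpha|\le k}\sup_{K}|\partial^\alpha\phi|$ for every test function $\phi$. Then $(\widehat\chi\ast T)(\xi)=\langle T_\eta,\widehat\chi(\xi-\eta)\rangle$, and for $\eta$ ranging in $K$ every derivative $\partial_\eta^\alpha\widehat\chi(\xi-\eta)=(-1)^{|\alpha|}(\partial^\alpha\widehat\chi)(\xi-\eta)$ is bounded by $C_{\alpha,N}\langle\xi-\eta\rangle^{-N}\le C'_{\alpha,N}\langle\xi\rangle^{-N}$ (Peetre's inequality, $\eta\in K$ bounded), for every $N$. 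Hence $|(\widehat\chi\ast T)(\xi)|\le C_N\langle\xi\rangle^{-N}$ for all $N$; taking $N=r+d$ and combining with the previous paragraph yields $\widehat{\chi f}\in L^\infty_{v_{r+d}}$, which is exactly $|\widehat{\chi f}(\xi)|\le C(1+|\xi|)^{-r-d}$ up to passing to the equivalent weight $\langle\xi\rangle$.

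The only delicate point — and the real content of the argument — is that $\widehat f$ need not be locally integrable near the origin, since its homogeneity degree $-r-d$ is $<-d$; thus one cannot bound the convolution integral pointwise there, and this is precisely why the near-origin part $\rho\widehat f$ is handled through the finite-order estimate rather than naively, and why the homogeneous-distribution theorem (which grants smoothness of $\widehat f$ away from $0$, hence the bound $|\widehat f(\eta)|\lesssim|\eta|^{-r-d}$ for $|\eta|\ge1$) is invoked. Everything else is routine.
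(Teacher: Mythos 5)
Your proof is correct and follows essentially the same route as the paper: invoke H\"ormander's theorem to get that $\widehat f$ is homogeneous of degree $-r-d$ and smooth off the origin, split $\widehat f$ with a cut-off near $0$, handle the compactly supported singular piece via $\mathcal{E}'\ast\mathcal{S}\subset\mathcal{S}$ (which you simply spell out through the finite-order estimate), and bound the remaining convolution using the decay $|\widehat f(\eta)|\lesssim|\eta|^{-r-d}$ away from the origin (the paper leaves this step implicit, while you route it through the convolution algebra property of $L^\infty_{v_{r+d}}$). No gaps.
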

 \begin{proof}
 We know that the Fourier transform of $f$ is a homogeneous distribution of degree $-r-d$, smooth in $\rd\setminus\{0\}$ \cite[Vol.1, Theorems 7.1.16, 7.1.18]{hormander3}. Hence, if $\chi'\in C^\infty_0(\rd)$, $\chi=1$ in a neighborhood of the origin we have 
 \begin{equation}\label{equl}
 |(1-\chi'(\xi)) \widehat{f}(\xi)|\leq C(1+|\xi|)^{-r-d},\quad \xi\in\rd.
 \end{equation}
 On the other hand, we have 
 \[
 |\widehat{\chi f}(\xi)|\leq(2\pi)^d \Big(|\big((\chi'\widehat{f})\ast\widehat{\chi}\big)(\xi)|+ |\big( \big((1-\chi')\widehat{f}\big)\ast\widehat{\chi}\big)(\xi)|\Big).
  \]
Since $\widehat{\chi}\in\cS(\rd)$, the first term in the right-hand side has a rapid decay, because $\mathcal{E}'\ast\cS\subset\cS$,  whereas the second term can be easily estimated using \eqref{equl}. 
 \end{proof}

\section{Decay of the solution: proof of Theorem \ref{mainteo}}
To prove Theorem \ref{mainteo} it is sufficient to prove the following boundedness result.
\begin{proposition}\label{pro5}
For every $0\leq s\leq m+d$ we have
\[
p(D)^{-1}=p^{-1}(D): L^\infty_{v_s}\to L^\infty_{v_s}
\]
continuously.
\end{proposition}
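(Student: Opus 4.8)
The goal is to show that the Fourier multiplier $p^{-1}(D)$ is bounded on $L^\infty_{v_s}$ for $0\le s\le m+d$. By Proposition \ref{pro1}, it suffices to write $p^{-1}(D)$ as a convolution operator whose kernel lies in $L^\infty_{v_s}$ — in fact it is enough to produce a kernel in $L^\infty_{v_{m+d}}$, since then boundedness on $L^\infty_{v_s}$ for all smaller $s$ comes for free from that proposition (note $m+d>d$, so the hypothesis $s>d$ there is met at the top endpoint, and the smaller exponents are covered by the range $0\le r\le s$). So the whole matter reduces to an estimate on the inverse Fourier transform of $p^{-1}(\xi)$.

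\medskip

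\noindent\textbf{Decomposing the symbol.} First I would split $p^{-1}$ with a cutoff: fix $\chi\in C^\infty_0(\rd)$ equal to $1$ near the origin, and write $p^{-1}=\chi p^{-1}+(1-\chi)p^{-1}$. For the second piece, $(1-\chi)p^{-1}$ is smooth everywhere and, by \eqref{ellitticita}, decays like $\langle\xi\rangle^{-M}$ together with all its derivatives (here one uses that each $p_{m_j}$ is smooth and homogeneous away from $0$, so away from $0$ the symbol $p$ and its derivatives are controlled); since $M\ge m$, a standard symbol estimate shows its inverse Fourier transform is a kernel that decays faster than any power that matters, in particular lies in $L^\infty_{v_{m+d}}$. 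The real work is the piece $\chi p^{-1}$ supported near $\xi=0$. There I would expand $p^{-1}$ in a Neumann-type series around the leading behaviour. Writing $p(\xi)=p_0\big(1+p_0^{-1}\sum_j p_{m_j}(\xi)\big)$ and $q(\xi)=p_0^{-1}\sum_j p_{m_j}(\xi)$, which is small on the support of $\chi$ after shrinking $\chi$, we get $\chi p^{-1}=p_0^{-1}\chi\sum_{k\ge 0}(-q)^k$. Each term $\chi q^k$ is, up to cutoff, a finite linear combination of products of homogeneous functions; the lowest-order genuinely non-smooth contribution is of homogeneity $m$, coming from the term with a single factor $p_m$, and all the smooth homogeneous pieces (homogeneity $m_j$ for $p_{m_j}$ smooth, and all the higher powers of $q$ of homogeneity $\ge 2m_1$ etc.) are harmless.

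\medskip

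\noindent\textbf{Applying Proposition \ref{pro4} termwise.} For a single homogeneous summand, Proposition \ref{pro4} gives exactly $\widehat{\chi g}\in L^\infty_{v_{r+d}}$ when $g$ is homogeneous of degree $r>0$, smooth away from $0$ — and this holds whether or not $g$ is smooth at $0$, since the proposition only asks for smoothness on $\rd\setminus\{0\}$. So the term with homogeneity exactly $m$ (the worst one) produces a kernel in $L^\infty_{v_{m+d}}$, which is the critical space; every other term has homogeneity $>m$ (the smooth $p_{m_j}$ with $m_j>m$ if any, but more importantly the presence of any smooth $p_{m_j}$ with $m_j<m$ is excluded by the definition of $m$, so all summands of homogeneity $<m$ are themselves smooth at $0$ — wait, this needs care) and thus sits in a better weighted space $L^\infty_{v_{r+d}}\subset L^\infty_{v_{m+d}}$. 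I also need to handle the constant term $\chi$ itself from $k=0$, whose transform is Schwartz, and I must check the Neumann series converges in the Banach algebra $\Fur L^\infty_{v_{m+d}}$ (Proposition \ref{pro0}): after choosing $\chi$ with small enough support, $\|\chi q\|_{\Fur L^\infty_{v_{m+d}}}<1$, using that $\chi q$ is a sum of cutoff homogeneous functions of positive degree, each in $\Fur L^\infty_{v_{m+d}}$ by Proposition \ref{pro4}, with norm that can be made small by shrinking the support (rescaling $\chi(\xi/\lambda)$ and tracking homogeneity). Then $\sum_k(-\chi q)^k$ converges in $\Fur L^\infty_{v_{m+d}}$, its sum times $p_0^{-1}\chi$ differs from $\chi p^{-1}$ only by smoothing terms, and altogether $p^{-1}(D)$ has kernel in $L^\infty_{v_{m+d}}$.

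\medskip

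\noindent\textbf{The main obstacle.} The delicate point is the bookkeeping of which homogeneities actually appear in the expansion and verifying that none of them falls strictly below $m$ without being smooth at the origin — this is precisely where the definition $m=\min\{m_j:\ p_{m_j}\text{ not smooth}\}$ is used, since a summand of homogeneity $m_j<m$ is smooth at $0$ hence (being smooth and homogeneous on all of $\rd$, extended by the cutoff) contributes a Schwartz kernel, while cross-terms $p_{m_i}p_{m_j}$ of total degree $<m$ would only arise if $m_i+m_j<m$ with at least one of $p_{m_i},p_{m_j}$ non-smooth, and one must argue such products are again smooth away from $0$ and of positive homogeneity $\ge m_1$ — actually the cleanest route is just to note that $\chi q^k$ for $k\ge1$ is smooth on $\rd\setminus\{0\}$, positively homogeneous modulo the cutoff error, of degree $\ge m_1>0$, so Proposition \ref{pro4} applies and we only lose ground (relative to the target weight $m+d$) if the degree is $<m$, which for $k\ge2$ cannot happen since degree $\ge 2m_1\ge$\dots hmm, $2m_1$ need not exceed $m$. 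So the genuinely careful part is isolating the $k=1$, $j=1$ term where $p_{m_1}$ is non-smooth as the unique critical contribution and bounding all the rest; once the homogeneity accounting is pinned down, the rest is the convergent-series argument above.
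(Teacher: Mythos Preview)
Your overall strategy---split $p^{-1}$ into low- and high-frequency pieces, handle the low-frequency piece by a Neumann series in the Banach algebra $\mathcal{F}L^\infty_{v_{m+d}}$, and the high-frequency piece by symbol estimates---is exactly the paper's approach. But there is a genuine gap in the high-frequency part, and the low-frequency argument is more tangled than it needs to be.

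\medskip
\textbf{The gap in the high-frequency piece.} You assert that the inverse Fourier transform of $(1-\chi)p^{-1}$ ``lies in $L^\infty_{v_{m+d}}$'' and then invoke Proposition~\ref{pro1}. This fails whenever $M\le d$: the symbol $(1-\chi)p^{-1}$ belongs to $S^{-M}$, so its convolution kernel $K$ does decay rapidly at infinity, but near the origin it behaves like $|x|^{M-d}$ (or logarithmically when $M=d$). In particular $K\notin L^\infty$, and Proposition~\ref{pro1} simply does not apply. This already happens in the Benjamin--Ono case ($M=d=1$) and in Example~2 ($M=2$, $d=3$). The paper does \emph{not} try to put this kernel into $L^\infty_{v_{m+d}}$; instead it records the standard pointwise estimates
\[
|K(x)|\le C|x|^{-d+M}\ \ (|x|\le 1),\qquad |K(x)|\le C_N|x|^{-N}\ \ (|x|\ge 1),
\]
and verifies the Schur-type bound $\sup_x\int_{\rd}|K(x-y)|\,\langle x\rangle^s\langle y\rangle^{-s}\,dy<\infty$ directly, using only that $K$ is locally integrable (since $M>0$) and rapidly decreasing at infinity. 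This direct estimate is the missing step in your argument.

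\medskip
\textbf{The bookkeeping in the low-frequency piece.} Your worry about cross-terms $p_{m_i}p_{m_j}$ of total degree below $m$, and the trailing ``hmm, $2m_1$ need not exceed $m$'', are symptoms of tracking homogeneities inside the series when you don't need to. The clean observation is just that each single summand satisfies $\tilde{\chi}\,p_{m_j}\in\mathcal{F}L^\infty_{v_{m+d}}$: if $m_j\ge m$ this is Proposition~\ref{pro4} (giving the better space $\mathcal{F}L^\infty_{v_{m_j+d}}\subset\mathcal{F}L^\infty_{v_{m+d}}$), while if $m_j<m$ then by the very definition of $m$ the function $p_{m_j}$ is smooth on all of $\rd$, so $\tilde{\chi}\,p_{m_j}\in C^\infty_0\subset\mathcal{F}L^\infty_{v_s}$ for every $s$. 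Hence $\tilde p=\tilde{\chi}\sum_j p_{m_j}\in\mathcal{F}L^\infty_{v_{m+d}}$. After a rescaling---the paper replaces $p(\xi)$ by $p(\epsilon\xi)=p_0+\sum_j\epsilon^{m_j}p_{m_j}(\xi)$, so that each positive power $\epsilon^{m_j}$ drives the norm of $\tilde p$ below $|p_0|$---the Neumann series $\sum_{\ell\ge0}(-\tilde p/p_0)^\ell$ converges in the Banach algebra $\mathcal{F}L^\infty_{v_{m+d}}$, and that is the end of it. There is no need to inspect the homogeneity of $\tilde p^{\,k}$ term by term: membership and convergence in the algebra are automatic once $\tilde p$ is in it with small norm.
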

Indeed, since $u\in L^\infty_{v_{\varepsilon_o}}$  we have $F(u)\in L^\infty_{v_{p\varepsilon_o}}$. Moreover the equation can be written in integral form as
\[
u=p(D)^{-1}(F(u)).
\]
Hence, Proposition \ref{pro5} implies that if $p\varepsilon_o \leq d+m$ then 
$$u=p(D)^{-1}(F(u)) \in L^\infty_{v_{p\varepsilon_o}} \subset L^\infty_{v_{d+m}}.$$

\begin{proof}[Proof of Proposition \ref{pro5}]
Let $\chi\in C^\infty_0(\rd)$, $\chi=1$ in a neighborhood of the origin. We write
\[
\frac{1}{p(\xi)}=\underbrace{\frac{\chi(\xi)}{p(\xi)}}_{q_1(\xi)}+\underbrace{\frac{1-\chi(\xi)}{p(\xi)}}_{q_2(\xi)}.
\]
Let us prove that
\begin{equation}\label{eq1}
q_j(D):L^\infty_{v_s}\to L^\infty_{v_s}, \qquad j=1,2,
\end{equation}
continuously.\par
As for $q_1(D)$ is concerned, since $m>0$, by Proposition \ref{pro1} it suffices to prove that the convolution kernel of $q_1(D)$, which is $\Fur^{-1}(q_1)(x)$, belongs to $L^\infty_{v_{d+m}}$, i.e. $q_1\in \Fur L^\infty_{v_{d+m}}$. Now, let $\tilde{\chi}\in C^\infty_0(\rd)$. It follows from Proposition \ref{pro4} that
\[
\tilde{p}(\xi):= \tilde{\chi}(\xi)\sum_{j=1}^h p_{m_j}(\xi)\in\Fur L^\infty_{v_{d+m}}.
\]
Hence, if $\tilde{\chi}=1$ on the support of $\chi$ we have 
\[
q_1(\xi)=\frac{\chi(\xi)}{p(\xi)}=\frac{\chi(\xi)}{p_0+\tilde{p}(\xi)}=\frac{\chi(\xi)}{p_0}\sum_{\ell=0}^\infty\Big(\frac{-\tilde{p}(\xi)}{p_0}\Big)^\ell\in \Fur L^\infty_{v_{d+m}}
\]
provided this series converges in $\Fur L^\infty_{v_{d+m}}$ (because $\Fur L^\infty_{v_{d+m}}$ is a Banach algebra). Now, this is certainly true if $\|\tilde{p}\|_{\Fur L^\infty_{v_{d+m}}}<|p_0|$, and after a rescaling (which preserves the space $\Fur L^\infty_{v_{d+m}})$ we can suppose that this is the case. Indeed, 
\[
p(\epsilon \xi)=p_0+\sum_{j=1}^h \epsilon^{m_j}p_{m_j}(\xi),
\]
but now the terms of order $>0$ are all multiplied by positive powers of $\epsilon$ ($m_j>0$), whereas the constant term $p_0$ remains the same. Hence, if $\epsilon$ is small enough, so that $\chi(\xi)\chi(\epsilon \xi)=\chi(\xi)$, we have 
\[
q_1(\epsilon \xi)=\frac{\chi(\epsilon \xi)}{p(\epsilon \xi)}=\frac{\chi(\xi)}{p_0+\sum_{j=1}^h \epsilon^{m_j}p_{m_j}(\xi)}+\frac{(1-\chi(\xi))\chi(\epsilon \xi)}{p(\epsilon \xi)}.
\]
The first term can now be treated by a power series expansion, as above, whereas the second one is in $C^\infty_0(\rd)\subset\Fur L^\infty_{v_{m+d}}$.
This concludes the proof of \eqref{eq1} for $q_1(D)$.\par
Let us now prove it for $q_2(D)$. Because of the condition \eqref{ellitticita} and the presence of the cut-off we see that $q_2$ belongs to the standard symbol class $S^{-M}$. Since $M>0$, by classical results we have that the convolution kernel $K=\Fur^{-1}(q_2)$ of $q_2(D)$ is a locally integrable function (\cite[Remark (i), pag. 245]{stein}) and that it satisfies the estimate
\begin{equation}\label{stime}
|K(x)|\leq\begin{cases}
C|x|^{-d+M}& |x|\leq 1\\
C_N |x|^{-N}& |x|\geq 1
\end{cases}
\end{equation}
for every $N$; see e.g. \cite[Formulas (0.2.5), (0.5.5)]{taylor}.\par Now, \eqref{eq1} for $q_2$ follows from the estimate
\[
\sup_{x\in \rd}\int_{\rd} |K(x-y)|\frac{\langle x\rangle^s}{\langle y\rangle^s}\, dy<\infty.
\]
This holds true because, using \eqref{stime}, we have
\begin{align*}
&\int_{|x-y|<1}|x-y|^{-d+M}\frac{\langle x\rangle^s}{\langle y\rangle^s}\,dy+\int_{|x-y|\geq 1}\frac{1}{|x-y|^N}\frac{\langle x\rangle^s}{\langle y\rangle^s}\, dy\\
\lesssim&\int_{|x-y|<1}|x-y|^{-d+M}\,dy+\int_{|x-y|\geq 1}\frac{1}{|x-y|^{N-s}}\, dy<C.
\end{align*}
\end{proof}
\section{Smoothness and decay of the derivatives: proof of Theorem \ref{mainteo2}}
First we show that
$u\in H^s $ for all $s\in\R.$
This is obtained by an easy bootstrap argument. Namely, we write the equation as 
\[
u=p(D)^{-1}(F(u)).
\]
Since we know from Theorem \ref{mainteo} that $u\in L^2\cap L^\infty$, we have $F(u)\in L^2$. Now, the multiplier $p(D)^{-1}$ has a symbol $p(\xi)^{-1}$, which is equivalent to $\langle \xi\rangle^{-M}$, by the assumption \eqref{ellitticita}. Hence, by Parseval equality we obtain that $p(D)^{-1}:H^s\to H^{s+M}$, $s\in\R$, continuously. This gives $u\in H^M\cap L^\infty$. One can apply this argument repeatedly, because $u\in H^s\cap L^\infty$ implies that $F(u)\in H^s\cap L^\infty$ by Schauder's estimates. Hence $u \in H^{s+M}$. Iterating this argument we obtain that $u \in H^s$ for all $s \in \R$.\par
Let us now prove \eqref{stime2} by induction on $|\alpha|$. By Theorem \ref{mainteo} it holds for $\alpha=0$. Suppose that it holds for $|\alpha|=N-1$. It will then suffice to prove that
\begin{equation}\label{stima3}
\|\langle x\rangle^{m+d}x^\beta\partial^\alpha u\|_{L^\infty}<\infty,\quad|\beta|\leq |\alpha|=N\geq 1.
\end{equation}
First of all we observe a preliminary (non-optimal) decay for $\partial^\alpha u$. To simplify the notation, set $X^s=L^\infty_{v_s}$, $s\geq0$. Since $u \in H^s$ for any $s \in \R$, from the inductive hypothesis and the interpolation inequalities (Proposition \ref{interp} with $\ell=1$, $n$ large enough and $u$ replaced by $D^{N-1}u$), we have 
\begin{equation}\label{stima4}
\partial^\alpha u\in X^{m+d+N-1-\epsilon},\quad |\alpha|=N
\end{equation}
for every $\epsilon>0$.\par
Now, let $\chi\in C^\infty_0(\rd)$, $\chi=1$ in a neighborhood of the origin. We introduce commutators in the equation $p(D)u=F(u)$ and apply $p(D)^{-1}$; we have 
\begin{multline}\label{stima5}
x^\beta\partial^\alpha u
=\underbrace{p(D)^{-1}[(\chi p)(D),x^\beta]\partial^\alpha u}_{=:q_1(D)u}
+\underbrace{p(D)^{-1}[((1-\chi) p)(D),x^\beta]\partial^{\alpha} u}_{=:q_2(D)u}\\
+p(D)^{-1}(x^\beta \partial^\alpha F(u)).
\end{multline}
Consider first the last term. By the Fa\`a di Bruno formula and the interpolation inequalities we have 
\begin{equation}\label{eqq}
\|D^N F(u)\|_{L^\infty(I)}\leq C\sum_{1\leq \nu\leq N} \|F'\|_{C^{\nu-1}}\|u\|_{L^\infty(I)}^{\nu-1}\|D^N u\|_{L^\infty(I)}.
\end{equation}
cf. \cite[Formula (3.1.9)]{taylor}. In this formula, we use the same notation as in Proposition \ref{interp}, and the norm of $F'$ is meant on the range of $u$.\par We now estimate each term by observing that, for $\nu>1$ (integer)  we have $|u|^{\nu-1}\in X^{m+d}$, which implies $\langle x\rangle^{1+\epsilon} |u|^{\nu-1}\in L^\infty$ if $\epsilon<m$; when $\nu=1$ instead $|F'(u)|\leq C|u|\in X^{m+d}$, so that $\langle x\rangle^{1+\epsilon} F'(u)\in L^\infty$ if $\epsilon<m$. On the other hand, by  \eqref{stima4}, we also have $\partial^\alpha u\in X^{m+d+N-1-\epsilon}$ if $|\alpha|=N$. We deduce by \eqref{eqq} that $D^N F(u)\in X^{m+d+N}$. By Proposition \ref{pro5} we get $p(D)^{-1}(x^\beta\partial^\alpha F(u))\in X^{m+d}$. \par
We now prove the same conclusion for the first two terms in the right-hand side of \eqref{stima5}. \par
Consider $q_1(D)u$. By the symbolic calculus we can write
\[
[(\chi p)(D),x^\beta]\partial^\alpha u=-\sum_{0\not=\gamma\leq\beta}i^{|\gamma|}\binom{\beta}{\gamma}\big(\partial^\gamma_\xi (\chi p)\big)(D)(x^{\beta-\gamma}\partial^\alpha u),
\]
where the derivatives of $\chi p$ in the right-hand side are meant in the sense of distributions.
By the inverse Leibniz formula\footnote{Namely, 
\[
x^\beta\partial^\alpha
u(x)=\sum_{\gamma\leq\beta,\,\gamma\leq\alpha}\frac{(-1)^{|\gamma|}\beta!}{(\beta-\gamma)!}
\binom{\alpha}{\gamma}\partial^{\alpha-\gamma}(x^{\beta-\gamma}u(x)).
\]
} we then obtain (since for $|\beta|\leq|\alpha|$)
\[
[(\chi p)(D),x^\beta]\partial^\alpha u=\sum_{0\not=\gamma\leq\beta}\sum_{\tilde{\alpha},\tilde{\beta}:|\tilde{\beta}|\leq|\tilde{\alpha}|<|\alpha|}  
C_{\alpha,\beta,\tilde{\alpha},\tilde{\beta}}
(\partial^{\ga}_\xi (\chi p))(D)\partial^{\tilde{\gamma}} (x^{\tilde{\beta}}\partial^{\tilde{\alpha}}u).
\]
where $\tilde{\gamma}$ is a suitable multi-index depending on $\alpha,\beta,\tilde{\alpha},\tilde{\beta},\gamma$, with $|\tilde{\gamma}|=|\gamma|$, and $C_{\alpha,\beta,\tilde{\alpha},\tilde{\beta}}$ are suitable constants. 
 \par
Now, by the inductive hypothesis we have $x^{\tilde{\beta}}\partial^{\tilde{\alpha}}u\in X^{m+d}$. The multiplier $\partial^{\ga}_\xi (\chi p)(D)\partial^{\tilde{\gamma}}$ has as symbol a sum of homogeneous functions, multiplied by cut-off functions, and having the same orders of the terms appearing in $p(\xi)$. In particular, the non-smooth terms have order at least $m$. Arguing as in the first part of the proof of Proposition \ref{pro5} we then deduce that such a symbol belong to $\Fur L^\infty_{v_{m+d}}$, as well as $p(\xi)^{-1}$ (Proposition \ref{pro5}), and therefore $q_1(D)$ maps $X^{m+d}$ into itself. This yields $q_1(D)u\in X^{m+d}$. \par
Consider now the term $q_2(D)u$ in \eqref{stima5}. Again by the symbolic calculus we can write
\begin{align}
q_2(D)u&= p(D)^{-1}[((1-\chi) p)(D),x^\beta]\partial^\alpha u\nonumber\\
&=-\sum_{0\not=\gamma\leq\beta}i^{|\gamma|}\binom{\beta}{\gamma}p(D)^{-1} \big(\partial^\gamma_\xi ((1-\chi)p)\big)(D)(x^{\beta-\gamma}\partial^\alpha u).\label{qpq}
\end{align}
Now, the multiplier
\[
p(D)^{-1}(\partial^{\ga}_\xi ((1-\chi) p))(D)
\]
has a symbol smooth in $\rd$ of order $-M+M-|\gamma|<0$, hence it maps $X^s\to X^s$ for $0\leq s\leq m+d$ by the same arguments as at the end of the proof of Proposition \ref{pro5}. Using \eqref{stima4} and $|\beta-\gamma|\leq N-1$ we see that $x^{\beta-\gamma}\partial^\alpha u\in X^{m+d-\epsilon}$ and therefore $q_2(D) u\in X^{m+d-\epsilon}$. Hence $u\in X^{m+d+N-\epsilon}$. We can then use this information to obtain in \eqref{qpq} $x^{\beta-\gamma}\partial^\alpha u\in X^{m+d+1-\epsilon}\subset X^{m+d}$ (for $\epsilon<1$) and therefore $q_2(D)u\in X^{m+d}$. This concludes the proof of \eqref{stima3}.
\vskip0.2cm
Theorem \ref{mainteo2bis} is a direct consequence of Theorem \ref{mainteo2}. We leave the proof to the reader.

\section{Analyticity of the solution: proof of Theorem \ref{mainteo3}}
We already know from the proof of Theorem \ref{mainteo2} that $u\in H^s$ for every $s\in\R$. Fix $s>d/2$, $\delta >0, N \in \N$ and consider the energy norms
\[
\mathcal{E}_N^\delta [u]=\sum_{|\alpha|\leq N}\frac{\delta^{|\alpha|}}{\alpha!}\|\partial^\alpha u\|_{H^s}.
\]
It is sufficient to prove that there exists $\delta>0$ such that the sequence $\mathcal{E}_N^\delta [u]$, $N=0,1,\ldots,$ is bounded. 
\par We can assume without loss of generality that $F(u)=u^k$ for some integer $k \geq 2.$ Starting from the identity $u=p(D)^{-1}(u^k)$ and differentiating both members we obtain, for any $\alpha \in \N^d \setminus \{0\}$:
$$\partial^\alpha u = p(D)^{-1} \partial^\alpha (u^k) = p(D)^{-1} \circ \partial_j (\partial^{\alpha -e_j}(u^k))$$
for some $j=j_\alpha \in \{1,\ldots, d\}.$\par
Now, we observe that the ellipticity condition \eqref{ellitticita} and $M>0$ imply that for every $\tau>0$ there exists a constant $C_\tau>0$ such that
\[
|p(\xi)^{-1}|\leq\tau+C_\tau\langle \xi\rangle^{-1},
\]
so that
\[
\|p(D)^{-1}\circ \partial_j v\|_{H^s}\leq \tau  \|\partial_j v\|_{H^s}+C_\tau\|v\|_{H^s}.
\]
Hence we get
$$
\|\partial^\alpha u\|_s \leq \tau \|\partial^\alpha u^k\|_{H^s}+C_\tau\|\partial^{\alpha-e_j} u^k\|_{H^s}.$$
Applying Leibniz' rule we have, since $s>d/2$,

\begin{multline*}
\frac{\|\partial^\alpha u\|_s}{\alpha!}\leq C\tau\|u\|_{H^s}^{k-1}\frac{\|\partial^\alpha u\|_{H^s}}{\alpha!}
+C\tau \sum_{\alpha_1 + \ldots +\alpha_k =\alpha \atop \alpha_j\not=\alpha\,\forall j} \frac{\|\partial^{\alpha_1} u\|_s}{\alpha_1!} \cdot \ldots \cdot \frac{\|\partial^{\alpha_k} u\|_s}{\alpha_k!}\\
+C C_\tau\sum_{\alpha_1 + \ldots +\alpha_k =\alpha-e_j} \frac{\|\partial^{\alpha_1} u\|_s}{\alpha_1!} \cdot \ldots \cdot \frac{\|\partial^{\alpha_k} u\|_s}{\alpha_k!}
\end{multline*}
for some constant $C>0$ depending only on $s,k,d$.\par

Hence, multiplying by $\delta^{|\alpha|}$ and summing up for $|\alpha| \leq N$ it follows that
\begin{eqnarray*}
\mathcal{E}_N^{\delta}[u]\leq  \|u\|_s  + C\tau\|u\|_{H^s}^{k-1}\mathcal{E}_N^{\delta}[u]+    C'(\tau+\delta C_\tau)\ (\mathcal{E}_{N-1}^{\delta}[u])^k,
\end{eqnarray*} 
for some new constant $C'>0$ depending only on $d,k$.\par
If we choose $\tau$ and then $\delta$ small enough, iterating this estimate it is easy to prove that $\mathcal{E}_N^{\delta}[u]$ is bounded with respect to $N$. Theorem \ref{mainteo3} is then proved.

\vskip0.3cm
\textbf{Acknowledgement.} The authors wish to thank Professors Todor Gramchev and Luigi Rodino for some helpful discussions and comments.

\end{document}